\documentclass[11pt,a4paper]{article}

\usepackage[authoryear,longnamesfirst]{natbib}
\usepackage{amsmath,amssymb,amsthm,mathtools}
\usepackage{booktabs,array,multirow}
\usepackage{graphicx}
\usepackage{microtype}
\usepackage{float}
\usepackage{xcolor}
\usepackage{url}
\usepackage[margin=1in]{geometry}
\usepackage{enumitem}
\usepackage{hyperref}
\usepackage{authblk}

\newtheorem{theorem}{Theorem}[section]
\newtheorem{proposition}[theorem]{Proposition}
\newtheorem{lemma}[theorem]{Lemma}
\newtheorem{corollary}[theorem]{Corollary}
\theoremstyle{definition}

\theoremstyle{remark}
\newtheorem{remark}[theorem]{Remark}

\newcommand{\T}{\mathcal T}
\newcommand{\Pp}{\mathbb P}

\newcommand{\R}{\mathbb R}
\newcommand{\rank}{\operatorname{rank}}
\newcommand{\kernel}{\operatorname{ker}}
\newcommand{\spann}{\operatorname{span}}
\newcommand{\bbB}{\mathbf B}
\newcommand{\bbC}{\mathbf C}

\newcommand{\eps}{\varepsilon}

\def\tsc#1{\csdef{#1}{\textsc{\lowercase{#1}}\xspace}}
\tsc{WGM}
\tsc{QE}

\title {Geometry-dependent rank defect in  \texorpdfstring{$C^1$}{C1} cubic spline space}  

\author{%
Xinyu Wu \quad Jiansong Deng\\
\small \texttt{xinyu97@ustc.edu.cn}\quad
\texttt{dengjs@ustc.edu.cn}
}

\date{}

\begin{document}
\maketitle

\begin{abstract}
Determining the dimension of the $C^1$ cubic spline space $S_3^1(\T)$ on an arbitrary nondegenerate planar triangulation has remained unresolved since the 1970s. Schumaker's lower bound includes a local correction $\sigma$ for singular interior four-stars, and it was conjectured that this bound is always attained. We disprove this conjecture by constructing a one-parameter family of nondegenerate realizations of a fixed 18-triangle complex, with only the central vertex moving as $v_6(t)=(t,0)$ on the admissible interval $I=(-3/4,24/55)$. The family exhibits three distinct cases. For $t\in I\setminus\{1/5,3/83\}$, the lower bound is attained and $\dim S_3^1(\T(t))=33$. At $t=3/83$, the central four-star is singular, $\sigma=1$, and the resulting dimension 34 is exactly accounted for by the classical local correction. At $t=1/5$, however, all interior vertices are nonsingular and $\sigma=0$, yet $\dim S_3^1(\T(1/5))=34>P_{\T(1/5)}(1,3)=33$. The smoothing-cofactor calculation shows that the dependence at $t=3/83$ is confined to the central vertex block, whereas the dependence at $t=1/5$ couples all seven interior vertex cycles even though every individual block has full row rank. A complementary Bernstein--B\'ezier calculation gives the same dimension profile. Thus the singular-four-star correction does not capture every geometry-dependent contribution to $\dim S_3^1(\T)$; genuinely global compatibility must also be taken into account.
\end{abstract}

\noindent\textbf{Keywords:}
bivariate spline; cubic spline; triangulation;
Bernstein--B\'ezier form; smoothing cofactor

\section{Introduction}
Let $\T$ be a finite conforming triangulation of a connected polygonal disk $\Omega\subset\R^2$. For integers $d,r\ge0$, let
\begin{equation}
S_d^r(\T)
 :=\{s\in C^r(\Omega): s|_\tau\in\Pp_d(\tau)\ \text{for every }\tau\in\T\},
\label{eq:spline-space}
\end{equation}
where $\Pp_d(\tau)$ denotes the restrictions to $\tau$ of bivariate polynomials of total degree at most $d$. In contrast to univariate spline spaces, whose dimensions are determined by the degree, smoothness, and combinatorics of the knot partition, the dimension of $S_d^r(\T)$ may also depend on the geometric realization of $\T$. A classical local manifestation of this geometry dependence is a \emph{singular} interior vertex, namely a valence-four interior vertex whose four incident edges lie on two straight lines; let $\sigma=\sigma(\T)$ denote the number of such vertices. 
This work focuses on $\dim S_3^1(\T)$ for arbitrary nondegenerate triangulations, a problem that has remained unresolved since the 1970s.

Gilbert Strang initiated the systematic study of this dimension problem by counting polynomial coefficients and smoothness constraints and asking when the resulting constraints are independent \cite{Strang1974}. Progress in the surrounding $C^1$ theory showed a sharp dependence on the polynomial degree. John Morgan and Ridgway Scott constructed a nodal basis for $S_d^1(\T)$ when $d\ge5$ \cite{MorganScott1975}; this range was later included in the arbitrary-smoothness results for $d\ge4r+1$ of Alfeld and Schumaker and for $d\ge3r+2$ of Hong \cite{AlfeldSchumaker1987,Hong1991}. Alfeld and Schumaker also settled the borderline degree $d=3r+1$ \cite{AlfeldSchumaker1990}. Thus, for $r=1$, the dimensions of $S_5^1(\T)$ and $S_4^1(\T)$ are known for arbitrary triangulations. At degree two, however, an unpublished manuscript of Morgan and Scott gave the now-classical seven-triangle example in which $\dim S_2^1(\T)$ is generically six but becomes seven when three distinguished lines are concurrent, although the combinatorial triangulation is unchanged \cite{MorganScottManuscript}. This geometry-dependent phenomenon was subsequently developed in the degree-$2r$, smoothness-$r$ regime \cite{Diener1990}.

The intervening cubic space $S_3^1(\T)$ remained exceptional. Strang's coefficient-and-constraint count supplied the expected dimension and initiated the corresponding equality conjecture \cite{Strang1974}. Schumaker subsequently incorporated the singular-vertex contribution and established a uniform lower bound valid for every geometric realization \cite{Schumaker1979,Schumaker1984}. Write $F$, $E_0$, and $V_0$ for the numbers of triangles, interior edges, and interior vertices. For every nondegenerate conforming triangulation $\T$ of a polygonal disk, the bound for $S_3^1(\T)$ is
\begin{equation}
\dim S_3^1(\T)\ge P_{\T}(1,3)
 =10+3E_0-7V_0+\sigma
 =10F-7E_0+3V_0+\sigma,
\label{eq:schumaker}
\end{equation}
where the second equality follows from $F-E_0+V_0=1$. The term $\sigma$ records a classical local geometry dependence: for a four-triangle pinwheel, moving the center from a nonsingular position onto the intersection of two opposite edge lines increases the cubic $C^1$ dimension by one \cite{Schumaker1979,AndersonMatherneTymoczko2024}.

Billera proved that equality holds in \eqref{eq:schumaker} for generic triangulations \cite{Billera1988}. Later algebraic and homological formulations identified possible discrepancies through correction modules invisible to a purely combinatorial count \cite{SchenckStillman1997,Schenck2016,SchenckStillmanYuan2020}, while structure-matrix, cell-reduction, and specialized homological arguments established equality for further restricted classes of triangulations \cite{LuoWang2006,Jaklic2022,DiPasqualeYuan2024}. Nevertheless, as late as 2024, the arbitrary-geometry cubic case was still recorded as open and no counterexample to the equality was known \cite{AndersonMatherneTymoczko2024}. The unresolved conjecture was therefore whether
\begin{equation}
\dim S_3^1(\T)=P_{\T}(1,3)
\label{eq:lower-bound-conjecture}
\end{equation}
holds for every nondegenerate planar triangulation, or equivalently whether a realization with $\sigma=0$ can carry an additional, nonlocal dimension.

This article answers the question negatively by giving, to the best of our knowledge, the first explicit nonsingular counterexample to \eqref{eq:lower-bound-conjecture}. We construct a fixed abstract triangulation with 18 triangles, 24 interior edges, and 7 interior vertices, and move only its central vertex according to $v_6(t)=(t,0)$. The complete admissible interval is $I=\left(-\frac34,\frac{24}{55}\right)$, on which every realization is a nondegenerate conforming triangulation. The exact rank profile separates the family into three mathematically distinct cases.
\begin{enumerate}[label=(\roman*)]
\item For $t\in I\setminus\{1/5,3/83\}$, all interior vertices are nonsingular, the conformality matrix has its generic rank $49$, and $\dim S_3^1(\T(t))=P_{\T(t)}(1,3)=33$. This is precisely the value predicted by Schumaker's lower bound.
\item At $t=3/83$, the central vertex is a singular four-star. A single local vertex block loses one rank, $\sigma=1$, and $\dim S_3^1(\T(3/83))=34=P_{\T(3/83)}(1,3)$. This is precisely the familiar local event already captured by Schumaker's correction.
\item At $t=1/5$, every interior vertex is nonsingular and every individual vertex block has full row rank, yet the assembled matrix again loses one rank. Its unique row dependence couples all seven interior vertex cycles, and $\dim S_3^1(\T(1/5))=34>P_{\T(1/5)}(1,3)=33$.
This is a genuinely global geometry-dependent defect and the promised counterexample to \eqref{eq:lower-bound-conjecture}.
\end{enumerate}

The remainder is organized as follows. Section~\ref{sec:matrix-models} collects the Bernstein--B\'ezier and smoothing-cofactor preliminaries used in the dimension calculation. Section~\ref{sec:family} introduces the one-parameter family and establishes its admissible geometry. Section~\ref{sec:rank-profile} proves the complete rank profile, explains the local and global rank-loss mechanisms, and records a complementary B-net check. The final section summarizes the main conclusions.

\section{Preliminaries}
\label{sec:matrix-models}

This section recalls two standard matrix methods for computing spline dimensions. The Bernstein--B\'ezier (B-net) method works directly with the $C^0$ and $C^1$ joining conditions for triangular cubic patches and provides a patch-based dimension calculation. The smoothing-cofactor method eliminates the common $C^0$ trace variables and reduces smoothness to compatibility equations around interior-vertex cycles. For the present family, the cofactor matrix is smaller, and its vertex-block structure makes the local or global origin of a rank defect explicit. We therefore use the smoothing-cofactor method to prove Theorem~\ref{thm:rank-profile}, while Remark~\ref{rem:bnet-verification} records the B-net computation as an independent verification. 

\subsection{Bernstein--B\'ezier edge matching}

Let $\tau=[a,b,c]$ be a nondegenerate triangle with barycentric coordinates $(\lambda_a,\lambda_b,\lambda_c)$. Every $p\in\Pp_3(\tau)$ has a unique Bernstein representation
\begin{equation}
p=\sum_{i+j+k=3} b^{\tau}_{ijk}
   \frac{3!}{i!j!k!}\lambda_a^i\lambda_b^j\lambda_c^k.
\label{eq:bernstein}
\end{equation}
Thus a piecewise cubic on $F$ triangles is described by $10F$ B-ordinates.

Consider two adjacent triangles
\[
\tau=[a,b,c],\qquad \tau'=[a,b,d],
\]
and write
\begin{equation}
d=\alpha a+\beta b+\gamma c,
\qquad \alpha+\beta+\gamma=1.
\label{eq:barycentric-opposite}
\end{equation}
Let $b_{ijk}$ and $\widetilde b_{ijk}$ denote the B-ordinates on $\tau$ and $\tau'$, respectively, with their first two indices associated with $a$ and $b$.

\begin{proposition}[B-net joining conditions]
\label{prop:bnet}
The two cubic pieces join with $C^1$ smoothness across $[a,b]$ if and only if
\begin{align}
\widetilde b_{ij0}&=b_{ij0}, &&i+j=3, \label{eq:c0-bnet}\\
\widetilde b_{ij1}&=\alpha b_{i+1,j,0}+\beta b_{i,j+1,0}+\gamma b_{ij1},
&&i+j=2. \label{eq:c1-bnet}
\end{align}
\end{proposition}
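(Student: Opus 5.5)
The plan is to reduce both conditions to statements about polynomials on the common line and then read off Bernstein coefficients. Let $p$ and $\widetilde p$ be the cubics on $\tau$ and $\tau'$. Since both are polynomials, $C^1$ smoothness across $[a,b]$ means $p=\widetilde p$ and $\nabla p=\nabla\widetilde p$ on the segment, hence on the whole line through $a,b$. Because the derivative in the direction of $e=b-a$ is determined by the trace, it suffices to require equality of the traces together with equality of one transversal directional derivative on the edge. I will take the direction $u=d-a$, which is transversal to $[a,b]$ because $\tau'$ is nondegenerate.

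\emph{Step 1 ($C^0$).} On the edge, $\lambda_c=0$ for $\tau$ and the barycentric coordinate of $d$ vanishes for $\tau'$. The remaining two coordinates coincide with the univariate barycentric coordinates of the segment $[a,b]$ in both triangles. The traces are therefore the cubic univariate Bernstein polynomials with coefficients $b_{ij0}$ and $\widetilde b_{ij0}$. Since these Bernstein polynomials form a basis, the traces agree if and only if \eqref{eq:c0-bnet} holds.

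\emph{Step 2 ($C^1$).} I will use the standard directional derivative formula. If a vector $u$ has barycentric direction coordinates $(\mu_a,\mu_b,\mu_c)$ with $\sum\mu=0$, then
\[
D_u p=3\sum_{i+j+k=2}\bigl(\mu_a b_{i+1,j,k}+\mu_b b_{i,j+1,k}+\mu_c b_{i,j,k+1}\bigr)B^2_{ijk}.
\]
Restricted to the edge ($k=0$), this is a quadratic with coefficients $\mu_a b_{i+1,j,0}+\mu_b b_{i,j+1,0}+\mu_c b_{ij1}$ for $i+j=2$.

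\begin{itemize}
\item In $\tau$, the vector $u=d-a$ has direction coordinates $(\alpha-1,\beta,\gamma)$ by \eqref{eq:barycentric-opposite}.
\item In $\tau'=[a,b,d]$, the same vector has coordinates $(-1,0,1)$.
\end{itemize}

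Equating the two edge quadratics coefficientwise (again using that the quadratic Bernstein basis is a basis) gives
\[
-\widetilde b_{i+1,j,0}+\widetilde b_{ij1}=(\alpha-1)b_{i+1,j,0}+\beta b_{i,j+1,0}+\gamma b_{ij1}.
\]
Substituting $\widetilde b_{i+1,j,0}=b_{i+1,j,0}$ from Step 1 yields exactly \eqref{eq:c1-bnet}.

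\emph{Step 3 (both directions).} Every step is an equivalence: basis uniqueness gives the coefficient matching, and transversality of $u$ gives full gradient agreement. The conditions are therefore necessary and sufficient.

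The main obstacle is Step 2: deriving the directional derivative formula and computing the coordinates of $u$ correctly in each triangle, since sign and index bookkeeping is where errors enter. An alternative route is to differentiate \eqref{eq:bernstein} with respect to $\lambda_c$ along $\lambda_c\to0$ using the chain rule. I prefer the directional-derivative approach because it is cleaner.
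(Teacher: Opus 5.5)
Your proof is correct and follows essentially the same route as the paper's sketch: match the traces through the univariate Bernstein basis, then match one transversal derivative by using the barycentric expression $d=\alpha a+\beta b+\gamma c$. The paper speaks only of the direction from $c$ to $d$ and defers the details to Lai--Schumaker, whereas your choice $u=d-a$, with direction coordinates $(\alpha-1,\beta,\gamma)$ in $\tau$ and $(-1,0,1)$ in $\tau'$, writes the bookkeeping out explicitly.
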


\begin{proof}
The four equations in \eqref{eq:c0-bnet} identify the cubic traces on the common edge. Their tangential derivatives then agree automatically. The first interior B-net row encodes a transversal derivative, and expressing the transversal direction from $c$ to $d$ through \eqref{eq:barycentric-opposite} gives the three equations in \eqref{eq:c1-bnet}. These are the standard Bernstein--B\'ezier $C^1$ joining conditions; see, for example, \cite[Chapters~3 and~5]{LaiSchumaker2007} and \cite{Alfeld2016}.
\end{proof}

Assembling the seven equations on every interior edge gives
\begin{equation}
\bbB(\T)\in\R^{7E_0\times10F}.
\label{eq:bnet-matrix}
\end{equation}
Its kernel consists exactly of the globally $C^1$ B-nets, and therefore
\begin{equation}
\dim S_3^1(\T)=10F-\rank_{\R}\bbB(\T).
\label{eq:bnet-dim}
\end{equation}

\subsection{Smoothing cofactors and vertex cycles}

A smaller matrix follows by eliminating the common traces. Give every interior edge $e$ an orientation in the dual graph, from a triangle $\tau_e^-$ to the adjacent triangle $\tau_e^+$. Let $\ell_e$ be a nonzero affine linear form whose zero set is the supporting line of $e$. Two cubics meet with $C^1$ smoothness across $e$ precisely when their difference is divisible by $\ell_e^2$; hence
\begin{equation}
p_{\tau_e^+}-p_{\tau_e^-}=\ell_e^2q_e,
\qquad q_e(x,y)=a_e+b_ex+c_ey\in\Pp_1.
\label{eq:cofactor-jump}
\end{equation}
Changing the scale of $\ell_e$ merely rescales the three cofactor variables attached to $e$ and does not alter any rank statement.

The dual graph has $F$ vertices and $E_0$ edges. Since $\T$ triangulates a disk, its cycle rank is
\[
E_0-F+1=V_0.
\]
The bounded faces of the embedded dual graph correspond to the interior vertices of $\T$, and their boundary walks form a cycle basis. Consequently, the jumps in \eqref{eq:cofactor-jump} integrate to a well-defined collection of triangle polynomials if and only if, for every interior vertex $v$,
\begin{equation}
\sum_{e\ni v}\eps_{ve}\ell_e^2q_e=0,
\label{eq:vertex-cycle}
\end{equation}
where $\eps_{ve}=\pm1$ records whether the chosen cycle around $v$ traverses the dual edge $e$ with or against its orientation.

Equation \eqref{eq:vertex-cycle} gives seven scalar conditions rather than ten. Indeed, introduce local coordinates $X=x-x_v$ and $Y=y-y_v$. If $e=[v,w]$, choose
\begin{equation}
\ell_e=A_{ve}X+B_{ve}Y,
\qquad A_{ve}=y_v-y_w,
\qquad B_{ve}=x_w-x_v.
\label{eq:edge-form}
\end{equation}
Writing $q_e=q_{e,v}+b_eX+c_eY$, with $q_{e,v}=a_e+b_ex_v+c_ey_v$, the product $\ell_e^2q_e$ belongs to
\[
\spann\{X^2,XY,Y^2,X^3,X^2Y,XY^2,Y^3\}.
\]
In this ordered local basis, the contribution of $(a_e,b_e,c_e)$ to the cycle equation at $v$ is the $7\times3$ block
\begin{equation}
H_{ve}=
\begin{pmatrix}
A^2 & A^2x_v & A^2y_v\\
2AB & 2ABx_v & 2ABy_v\\
B^2 & B^2x_v & B^2y_v\\
0 & A^2 & 0\\
0 & 2AB & A^2\\
0 & B^2 & 2AB\\
0 & 0 & B^2
\end{pmatrix},
\qquad A=A_{ve},\quad B=B_{ve}.
\label{eq:local-block}
\end{equation}
The conformality matrix is assembled from the signed blocks $\eps_{ve}H_{ve}$:
\begin{equation}
\bbC(\T)\in\R^{7V_0\times3E_0}.
\label{eq:cofactor-matrix}
\end{equation}

\begin{proposition}[Cofactor dimension formula]
\label{prop:cofactor-dim}
For any conforming triangulation of a polygonal disk,
\begin{equation}
\dim S_3^1(\T)
 =10+\dim\kernel\bbC(\T)
 =10+3E_0-\rank\bbC(\T).
\label{eq:cofactor-dim}
\end{equation}
\end{proposition}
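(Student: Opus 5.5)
The plan is to build a linear surjection from the spline space onto $\kernel\bbC(\T)$ whose kernel is exactly $\Pp_3$ (as global polynomials), and then apply rank--nullity. For $s\in S_3^1(\T)$ with pieces $p_\tau$, and for each interior edge $e$, the $C^1$ condition across $e$ means that $p_{\tau_e^+}-p_{\tau_e^-}$ vanishes to second order along the line $\ell_e=0$. So it is divisible by $\ell_e^2$, and the quotient $q_e$ lies in $\Pp_1$ for degree reasons. I would state the standard fact precisely: a polynomial vanishing with its gradient on a line is divisible by the square of the defining form. The quotient $q_e$ is uniquely determined because $\R[x,y]$ is an integral domain. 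Summing the jumps around the dual cycle of an interior vertex $v$ telescopes to zero, which is \eqref{eq:vertex-cycle}. Hence the coefficient vector $(a_e,b_e,c_e)_e$ lies in the kernel of the map defined by the cycle equations.

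Next I would check that this kernel is exactly $\kernel\bbC(\T)$. For each $e\ni v$, the product $\ell_e^2q_e$ with $\ell_e$ as in \eqref{eq:edge-form} lies in the span of the seven local monomials $X^2,\dots,Y^3$: $\ell_e$ is homogeneous linear in $(X,Y)$, so $\ell_e^2$ is a quadratic form, and $q_e=q_{e,v}+b_eX+c_eY$. The coefficient columns are then those of $H_{ve}$, and the identity $q_{e,v}=a_e+b_ex_v+c_ey_v$ produces the first three rows. Since these seven monomials are linearly independent, the polynomial equation \eqref{eq:vertex-cycle} is equivalent to the seven scalar rows of $\bbC$ at $v$. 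The rescaling remark handles the particular choice of $\ell_e$.

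Conversely, given $q\in\kernel\bbC(\T)$ and any $p\in\Pp_3$, I would fix a base triangle $\tau_0$, set $p_{\tau_0}=p$, and define $p_\tau$ by summing signed jumps $\ell_e^2q_e$ along any dual path from $\tau_0$ to $\tau$. For this to be well defined, the signed sum over every closed dual walk must vanish. This reduces to the cycle basis given by the bounded faces of the dual graph, i.e.\ the interior vertex cycles, which uses $E_0-F+1=V_0$ and the disk topology. I expect this to be the main point needing care: one must verify that, for a triangulated disk, the vertex-star cycles generate the whole cycle space of the dual graph. This follows from Euler's formula together with the fact that these face cycles are independent, each containing a dual edge not shared with the others in a suitable ordering. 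The resulting piecewise polynomial is $C^1$, because each jump is divisible by $\ell_e^2$. Different $q$ yield different splines modulo $\Pp_3$, since $q$ is recovered from the jumps.

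The resulting sequence $0\to\Pp_3\to S_3^1(\T)\to\kernel\bbC(\T)\to0$ is therefore exact. Since $\dim\Pp_3=10$, this gives $\dim S_3^1(\T)=10+\dim\kernel\bbC(\T)=10+3E_0-\rank\bbC(\T)$, because $\bbC$ has $3E_0$ columns.
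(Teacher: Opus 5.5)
Your proposal is correct and follows the paper's own argument. The paper maps a spline to its root polynomial on $\tau_0$ together with its unique edge cofactors, derives the vertex-cycle equations by telescoping, and rebuilds a spline from any kernel vector by summing jumps along dual paths, which is path independent because the interior-vertex cycles form a basis of the dual cycle space; this gives $S_3^1(\T)\cong\Pp_3\oplus\kernel\bbC(\T)$. Your exact sequence is the same isomorphism, and you spell out details the paper only asserts: the divisibility by $\ell_e^2$, the equivalence of the polynomial cycle identity with the seven scalar rows, and why the vertex cycles span the dual cycle space.
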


\begin{proof}
Choose a root triangle $\tau_0$. A spline determines its root polynomial $p_{\tau_0}\in\Pp_3$ and the unique edge cofactors in \eqref{eq:cofactor-jump}. Telescoping the polynomial jumps around every dual cycle gives \eqref{eq:vertex-cycle}, so the cofactors lie in $\kernel\bbC(\T)$.

Conversely, take $p_{\tau_0}\in\Pp_3$ and a cofactor vector in $\kernel\bbC(\T)$. For any triangle $\tau$, sum the oriented jumps $\ell_e^2q_e$ along a dual path from $\tau_0$ to $\tau$. The vertex-cycle equations make this sum path independent because the interior-vertex cycles form a basis of the dual cycle space. The resulting polynomial pieces satisfy \eqref{eq:cofactor-jump} on every interior edge and therefore join with $C^1$ smoothness. This gives a linear isomorphism
\[
S_3^1(\T)\cong\Pp_3\oplus\kernel\bbC(\T),
\]
from which \eqref{eq:cofactor-dim} follows.
\end{proof}

Combining \eqref{eq:schumaker} and \eqref{eq:cofactor-dim} suggests the discrepancy
\begin{equation}
\delta(\T)
 :=\dim S_3^1(\T)-P_{\T}(1,3)
 =7V_0-\sigma-\rank\bbC(\T).
\label{eq:defect}
\end{equation}
Schumaker's bound states that $\delta(\T)\ge0$. A singular four-star contributes through $\sigma$; the example below has $\delta=1$ although $\sigma=0$.

\section{A one-parameter triangulation}
\label{sec:family}

The abstract complex has 13 vertices and 18 triangles. Seven vertices are interior:
\begin{equation}
\begin{aligned}
v_0&=(-3,0), & v_1&=(-3/4,1),
&v_2&=(24/55,28/55),\\
v_3&=(3,0), & v_4&=(24/55,-28/55),
&v_5&=(-3/4,-1),\\
v_6(t)&=(t,0),&&&
\end{aligned}
\label{eq:interior-vertices}
\end{equation}
and the boundary vertices are
\begin{equation}
\begin{aligned}
v_7&=(0,4), &v_8&=(-6,4), &v_9&=(-6,-4),\\
v_{10}&=(0,-4), &v_{11}&=(6,-4), &v_{12}&=(6,4).
\end{aligned}
\label{eq:boundary-vertices}
\end{equation}
The triangle list is
\begin{equation}
\begin{split}
&(0,1,5),(1,2,6),(2,4,6),(2,3,4),(4,5,6),(5,1,6),\\
&(7,0,1),(7,1,2),(7,2,3),(10,3,4),(10,4,5),(10,5,0),\\
&(0,7,8),(0,8,9),(0,9,10),(3,10,11),(3,11,12),(3,12,7),
\end{split}
\label{eq:triangles}
\end{equation}
where $(i,j,k)$ denotes $[v_i,v_j,v_k]$.

The vertices $v_7,\ldots,v_{12}$ lie on the boundary of the disk. They are needed to close the outer triangulation and to verify conformity, but they do not contribute vertex-cycle equations: only interior vertices correspond to closed faces of the dual graph and enter the singular-vertex count and the conformality matrix. Consequently, the later rank analysis involves only $v_0,\ldots,v_6$. Figure~\ref{fig:mesh-family} shows the complete triangulation at a representative regular parameter and compares the two central configurations at which the rank drops.

\begin{figure}[htbp]
\centering
\includegraphics[width=\linewidth]{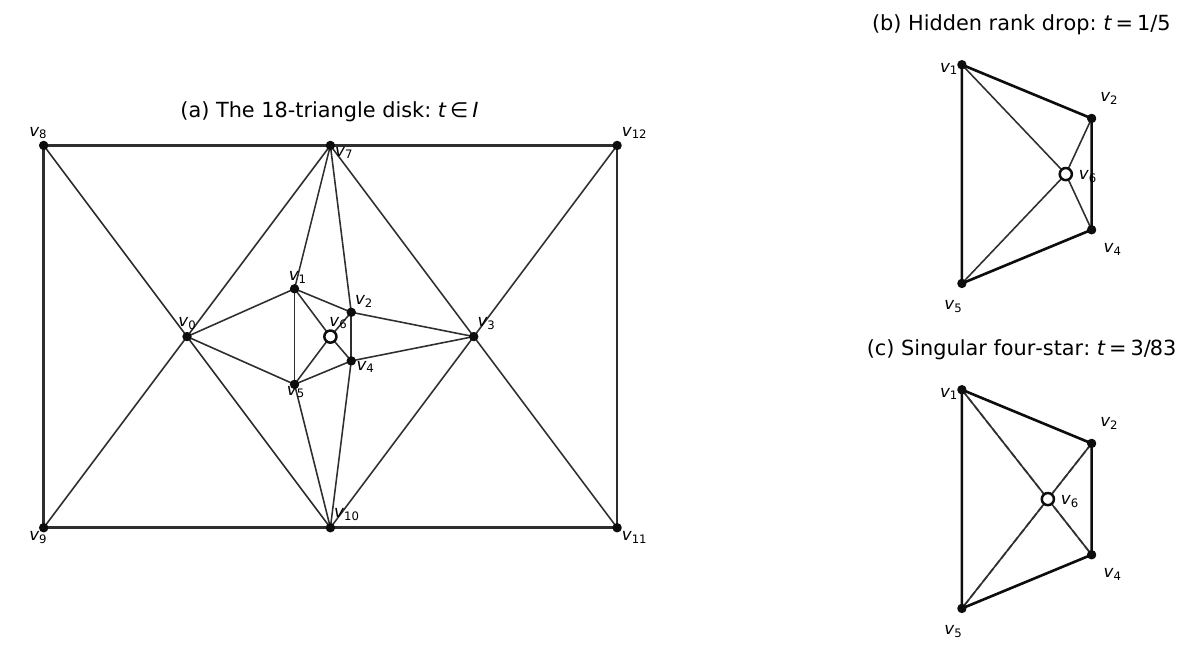}
\caption{The one-parameter family. (a) The complete 18-triangle disk at the representative value $t=0\in I$. The moving vertex is shown as an open circle. (b) At $t=1/5$, the four incident directions are distinct although the global conformality matrix loses one rank. (c) At $t=3/83$, the opposite edge pairs become collinear and the center is a singular four-star.}
\label{fig:mesh-family}
\end{figure}

\begin{lemma}[Admissible interval]
\label{lem:admissible}
For every
\begin{equation}
-\frac34<t<\frac{24}{55},
\qquad I:=\left(-\frac34,\frac{24}{55}\right),
\label{eq:interval}
\end{equation}
the coordinates \eqref{eq:interior-vertices}--\eqref{eq:boundary-vertices} and the triangle list \eqref{eq:triangles} define a nondegenerate conforming triangulation of the same polygonal disk. Throughout this interval,
\begin{equation}
F=18,\qquad E_0=24,\qquad V_0=7.
\label{eq:counts}
\end{equation}
\end{lemma}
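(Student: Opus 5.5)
The plan is to split the 18 triangles into the twelve that do not contain $v_6$ and the six that do. The first group does not depend on $t$. For each such triangle I will compute the signed area
\[
D(i,j,k)=\det\bigl(v_j-v_i,\;v_k-v_i\bigr)
\]
in exact rational arithmetic. The aim is to confirm that all twelve have the same (say positive) orientation. These checks are routine.

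For the six triangles containing $v_6$, namely $(1,2,6),(2,4,6),(4,5,6),(5,1,6)$, I will treat $D$ as an affine function of $t$. Each $D$ vanishes exactly when $v_6(t)$ lies on the line through the opposite edge.
\begin{itemize}[label={}]
\item Since $v_1$ and $v_5$ both have $x=-3/4$, the line through them is $x=-3/4$. So $D(5,1,6)$ changes sign at $t=-3/4$.
\item Since $v_2$ and $v_4$ both have $x=24/55$, the line through them is $x=24/55$. So $D(2,4,6)$ changes sign at $t=24/55$.
\item The line through $v_1,v_2$ meets $y=0$ at a point with $x$ outside $I$; the same holds for the line through $v_4,v_5$ by symmetry.
\end{itemize}
Together these show that all six determinants keep the common sign exactly when $t\in I$. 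This also explains why the endpoints of $I$ are sharp. Evaluating at $t=0$ fixes the sign.

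Conformity and covering of a fixed polygon come next. The boundary is the hexagon $v_8,v_9,v_{10},v_{11},v_{12},v_7$ together with the points on its edges, and it is independent of $t$. The plan is a degree (signed-area) argument: the triangles are consistently oriented, and the combinatorial complex is a disk in which every interior edge is shared by exactly two triangles with opposite induced orientation. Under these conditions the piecewise-affine map from the abstract disk is a local homeomorphism at interior points, so it is an embedding onto the region bounded by the boundary polygon. In practice this needs three combinatorial checks from the triangle list:
\begin{itemize}[label={}]
\item each interior edge lies in exactly two triangles with opposite orientation;
\item the boundary edges form a single closed polygon, traversed once;
\item the link of each interior vertex $v_0,\dots,v_6$ is a single cycle.
\end{itemize}
Then the sum of signed areas equals the area of the boundary polygon, and positive orientation rules out overlaps. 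Local injectivity at an interior vertex follows because the angles around it are all positive and sum to $2\pi$. That angle sum is itself a consequence of the covering degree being one, which is forced by the boundary winding number.

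The counts follow by direct enumeration. From the list, $F=18$. Counting edge incidences gives $3F=54=2E_0+E_b$. I expect $E_b=6$, namely $[7,8],[8,9],[9,10],[10,11],[11,12],[12,7]$. Then $E_0=24$, and Euler's formula $F-E_0+V_0=1$ gives $V_0=7$. This matches the list of interior vertices, whose links are closed cycles.

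The main obstacle is the global embedding step, not the sign checks. I plan to handle it with the consistent-orientation plus boundary-degree criterion rather than checking pairwise triangle intersections. A further point needs care: $v_0$ and $v_3$ must really be interior, even though they lie on the segment-like chains $v_8v_0\ldots$ and $v_{12}v_3\ldots$. Their links are
\[
(1,5,10,9,8,7)\quad\text{and}\quad(2,4,10,11,12,7),
\]
and I will confirm that each is a full cycle.
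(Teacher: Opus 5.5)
Your argument works once the bookkeeping below is fixed, but it proves the embedding by a different route from the paper.

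The paper localizes all $t$-dependence in the convex quadrilateral $Q=v_1v_2v_4v_5$. The point $v_6(t)$ is interior to $Q$ exactly for $t\in I$, so the four moving triangles form a star-fan of $Q$. The fourteen fixed triangles are checked once to tile the complement of $Q$: they are nondegenerate, nonadjacent edges have disjoint relative interiors, and their areas sum to the area between $Q$ and the hexagon.

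You instead use a winding-number criterion. A coherently oriented combinatorial disk, with all image triangles positive and boundary mapped onto a simple closed polygon, has pointwise covering number $1$ inside and $0$ outside. This forces angle sum $2\pi$ at interior vertices and then global injectivity. Your four sign conditions (the lines $x=-3/4$ and $x=24/55$, and the lines $v_1v_2$, $v_4v_5$ meeting $y=0$ at $x=5/3$) are exactly the half-plane description of $\mathrm{int}\,Q$ on the $x$-axis. So the $t$-dependent part coincides with the paper's.

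What each route buys:
\begin{itemize}
\item Yours gives a purely mechanical certificate (18 determinant signs plus incidence data). It replaces the paper's pairwise edge-disjointness check with a clean topological reason why no overlap or gap can occur.
\item The paper's gives the more transparent picture that nothing outside $Q$ moves, which makes the sharpness of both endpoints immediate.
\end{itemize}

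The bookkeeping that needs fixing:
\begin{itemize}
\item Only four triangles contain $v_6$. The fixed group therefore has fourteen triangles, including $(0,1,5)$ and $(2,3,4)$.
\item The triples as listed are not coherently oriented. All six triangles in the first row of the list have $D<0$ on $I$ (e.g.\ $D(0,1,5)=-9/2$, $D(5,1,6)=-(4t+3)/2$), while the twelve outer ones have $D>0$. Each of $[0,1],[1,2],[2,3],[3,4],[4,5],[5,0]$ is traversed in the same direction by its two triangles. Reverse those six triples first; then your ``opposite induced orientation'' and ``common sign'' checks both pass.
\item The identity ``sum of signed areas equals the polygon area'' holds for any coherently oriented disk and does not by itself exclude overlaps. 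What excludes them is the pointwise count: the number of image triangles containing a generic $p$ equals the winding number of the boundary about $p$. State that first and derive the angle sums from it.
\item To certify that the abstract complex is a disk, you also need the boundary-vertex links to be paths (e.g.\ $8,0,1,2,3,12$ at $v_7$), together with $\chi=13-30+18=1$.
\end{itemize}
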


\begin{proof}
The vertices $v_1,v_2,v_4,v_5$ form a convex quadrilateral $Q$, and its intersection with the $x$-axis is the segment joining $(-3/4,0)$ to $(24/55,0)$. Hence $v_6(t)$ lies in the interior of $Q$ precisely for $t\in I$. In that case the four moving triangles form the nonoverlapping fan of $Q$ about $v_6(t)$.

For completeness, their oriented doubled areas are
\begin{align}
[1,2,6]&=\frac{9(3t-5)}{55},
&[2,4,6]&=\frac{56(55t-24)}{3025},\nonumber\\
[4,5,6]&=\frac{9(3t-5)}{55},
&[5,1,6]&=-\frac{4t+3}{2},
\label{eq:moving-areas}
\end{align}
where $[i,j,k]=\det(v_j-v_i,v_k-v_i)$. All four expressions are nonzero and have the same sign on $I$. The remaining triangles and edges are fixed. Direct exact calculation from \eqref{eq:interior-vertices}--\eqref{eq:triangles} shows that every fixed triangle is nondegenerate, nonadjacent fixed edges have disjoint relative interiors, and the fixed triangle areas sum to the area between $Q$ and the boundary hexagon
\[
v_7-v_8-v_9-v_{10}-v_{11}-v_{12}-v_7.
\]
Thus replacing the interior fan point by any other point of the indicated segment cannot create a crossing, overlap, or gap. The edge set consists of 30 edges, six on the boundary and 24 in the interior, which gives \eqref{eq:counts}.
\end{proof}

\begin{lemma}[Singular-vertex count]
\label{lem:sigma}
For the family in Lemma~\ref{lem:admissible},
\begin{equation}
\sigma(t)=
\begin{cases}
1,&t=3/83,\\
0,&t\in I\setminus\{3/83\}.
\end{cases}
\label{eq:sigma-profile}
\end{equation}
\end{lemma}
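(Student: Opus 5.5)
The plan is to reduce the statement to a single interior vertex by counting valences, and then to settle that vertex by an explicit collinearity computation. The set of interior vertices is $\{v_0,\dots,v_6\}$, and this set, the triangle list \eqref{eq:triangles}, and hence every valence, are independent of $t$ by Lemma~\ref{lem:admissible}. So I will first read off from \eqref{eq:triangles} the triangles incident to each interior vertex.

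\emph{Valence count.} From \eqref{eq:triangles} I expect the following.
\begin{itemize}
\item $v_0$ lies in six triangles: $(0,1,5)$, $(7,0,1)$, $(10,5,0)$, $(0,7,8)$, $(0,8,9)$, $(0,9,10)$.
\item $v_3$ likewise lies in six triangles.
\item Each of $v_1,v_2,v_4,v_5$ lies in exactly five triangles. For example, $v_1$ lies in $(0,1,5)$, $(1,2,6)$, $(5,1,6)$, $(7,0,1)$, $(7,1,2)$.
\item $v_6(t)$ lies in exactly the four fan triangles $(1,2,6)$, $(2,4,6)$, $(4,5,6)$, $(5,1,6)$, with neighbours $v_1,v_2,v_4,v_5$ in that cyclic order.
\end{itemize}
Since a singular vertex must have valence four, only $v_6$ can contribute to $\sigma$. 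Hence $\sigma(t)\in\{0,1\}$, and $\sigma(t)=1$ exactly when the edges $[v_6,v_1]$, $[v_6,v_4]$ lie on one line and $[v_6,v_2]$, $[v_6,v_5]$ lie on another. These are the two pairs of opposite edges in the cyclic order.

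\emph{Collinearity condition.} For $t\in I$ the point $v_6(t)$ is interior to the convex quadrilateral $Q$ (proof of Lemma~\ref{lem:admissible}). So $v_6,v_1,v_4$ are collinear iff $v_6$ lies on the diagonal $v_1v_4$, and $v_6,v_2,v_5$ are collinear iff $v_6$ lies on the diagonal $v_2v_5$. Because $v_6=(t,0)$, the condition is that $t$ equals the abscissa where each diagonal meets the $x$-axis.

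For $v_1v_4$, the point where $y=0$ is at parameter $s=55/83$ from $v_1$. It has
\[
x=-\tfrac34+\tfrac{55}{83}\Bigl(\tfrac{24}{55}+\tfrac34\Bigr)=-\tfrac34+\tfrac{261}{332}=\tfrac{3}{83}.
\]
The reflection $y\mapsto -y$ swaps $v_1\leftrightarrow v_5$ and $v_2\leftrightarrow v_4$, so it maps the diagonal $v_1v_4$ onto $v_5v_2$. Therefore $v_2v_5$ meets the $x$-axis at the same point $(3/83,0)$, and $3/83\in I$.

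Neither diagonal is the $x$-axis itself, so each meets it in exactly one point. Consequently, for $t\neq 3/83$ neither opposite pair is collinear and $v_6$ is nonsingular. At $t=3/83$ both pairs are collinear, $v_6$ is a singular four-star, and $\sigma=1$.

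I expect no real obstacle here. The only care needed is bookkeeping: make sure the valence count from \eqref{eq:triangles} is correct, and check that the pairing used is the opposite-edge pairing in the cyclic order $v_1,v_2,v_4,v_5$ around $v_6$. The symmetry argument removes the need for a second line computation.
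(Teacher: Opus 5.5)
Your proof is correct and follows the paper's argument: the valence count $(6,5,5,6,5,5,4)$ leaves $v_6$ as the only candidate, and the collinearity conditions for the opposite pairs $(v_1,v_4)$ and $(v_2,v_5)$ both reduce to $t=3/83$. The one difference is cosmetic: the paper tests collinearity with the determinants $\det(v_1-v_6,v_4-v_6)=\det(v_2-v_6,v_5-v_6)=\frac{83t-3}{55}$, whereas you find where the diagonal $v_1v_4$ crosses the $x$-axis and transfer that point to $v_2v_5$ using the reflection $y\mapsto -y$.
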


\begin{proof}
The valences of $v_0,\ldots,v_6$ are $(6,5,5,6,5,5,4)$, so only $v_6$ can be a singular four-star. The determinants for the two required pairs of opposite directions coincide:
\begin{equation}
\det(v_1-v_6,v_4-v_6)
 =\det(v_2-v_6,v_5-v_6)
 =\frac{83t-3}{55}.
\label{eq:singular-determinant}
\end{equation}
They hold simultaneously exactly at $t=3/83$. At every other $t\in I$, the four incident edges have four distinct projective directions.
\end{proof}

Using \eqref{eq:counts}, the lower-bound expression is
\begin{equation}
P_{\T(t)}(1,3)=33+\sigma(t).
\label{eq:count-family}
\end{equation}
The issue is therefore whether the cofactor matrix has full row rank 49 whenever $\sigma=0$.

\section{The exact rank profile}
\label{sec:rank-profile}

Assemble the conformality matrix $\bbC(t)\in\R[t]^{49\times72}$ from \eqref{eq:local-block}. Rows are grouped into seven-row blocks for $v_0,\ldots,v_6$, ordered as
\begin{equation}
X^2,\ XY,\ Y^2,\ X^3,\ X^2Y,\ XY^2,\ Y^3,
\label{eq:row-order}
\end{equation}
and columns are grouped into triples $(a_e,b_e,c_e)$ for the 24 interior edges in lexicographic edge order. The triangles are numbered from 0 to 17 in the order displayed in \eqref{eq:triangles}; each dual edge is oriented from the lower to the higher triangle index, and every vertex cycle is traversed counterclockwise. These conventions are fixed throughout the certificates below.

\begin{theorem}[Complete rank profile]
\label{thm:rank-profile}
For every $t\in I$,
\begin{equation}
\rank\bbC(t)=
\begin{cases}
48,&t=1/5\ \text{or}\ t=3/83,\\
49,&t\in I\setminus\{1/5,3/83\}.
\end{cases}
\label{eq:rank-profile}
\end{equation}
Consequently,
\begin{equation}
\dim S_3^1(\T(t))=
\begin{cases}
34,&t=1/5\ \text{or}\ t=3/83,\\
33,&t\in I\setminus\{1/5,3/83\}.
\end{cases}
\label{eq:dimension-profile}
\end{equation}
\end{theorem}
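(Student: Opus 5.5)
The dimension statement \eqref{eq:dimension-profile} follows from \eqref{eq:rank-profile} through Proposition~\ref{prop:cofactor-dim}, since $10+3E_0-\rank\bbC(t)=82-\rank\bbC(t)$. So the work is to determine the rank of the polynomial matrix $\bbC(t)\in\R[t]^{49\times72}$ for every $t\in I$. The plan has three parts:
\begin{enumerate}[label=(\alph*)]
\item an upper bound, $\rank\bbC(t)\le49$;
\item a lower bound of $48$ together with an exact description of where rank $49$ fails;
\item an explicit left-kernel certificate at each of the two exceptional parameters.
\end{enumerate}

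\textbf{Generic rank via maximal minors.} Only $v_6$ moves, so the entries of $\bbC(t)$ depend on $t$ only through the blocks $H_{v e}$ with $e$ incident to $v_6$. These are the four edges $[v_6,v_i]$, $i\in\{1,2,4,5\}$, which appear in the blocks of $v_6$ and of $v_i$. For $e=[v_6,v_i]$ the entries $A_{ve}$, $B_{ve}$, $x_v$, $y_v$ are affine in $t$, so every entry of $\bbC(t)$ is a polynomial in $t$ of degree at most $3$. Rank $49$ on an open set is equivalent to the nonvanishing of some $49\times49$ minor. Let $g(t)$ be the gcd over $\R[t]$ of all $49\times49$ minors. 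Then $\rank\bbC(t)<49$ exactly at the real roots of $g$. The first step is therefore to compute $g$. In practice I would not compute all minors. Instead I would choose a few explicit column subsets of size $49$, compute those determinants exactly (by fraction-free elimination over $\mathbb Q[t]$), and take their gcd. The target is to show this gcd has the form $c\,(5t-1)^{a}(83t-3)^{b}$ times a factor with no roots in $I$. The factor $83t-3$ is expected from Lemma~\ref{lem:sigma} and \eqref{eq:singular-determinant}. Any extra factor would have to be checked for roots in $I$, for instance by Sturm sequences.

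\textbf{Local defect at $t=3/83$.} Here I would use a structural argument rather than only the certificate. At a singular four-star, the $7\times12$ block of $v_6$ consists of four $7\times3$ pieces built from only two line forms $\ell_1,\ell_2$. The products $\ell_1^2\Pp_1$ and $\ell_2^2\Pp_1$ span $\{\ell_1^2,\ell_2^2\}$ in degree $2$ together with a $4$-dimensional space in degree $3$. That is six dimensions in total, so this block has row rank $6$. The missing direction is the local $\ell_1\ell_2$ component. Explicitly, the left-kernel vector is the functional supported on the $v_6$ rows that picks out the $XY$ coefficient in the coordinates adapted to the two lines. I would check that it also annihilates the columns shared with $v_1,v_2,v_4,v_5$. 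This is automatic, because it is supported only on the $v_6$ rows.

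\textbf{Global defect at $t=1/5$.} Here I would exhibit an explicit rational vector $w\in\mathbb Q^{49}$ with $w^{\top}\bbC(1/5)=0$, obtained by exact nullspace computation of $\bbC(1/5)^{\top}$. I would then verify that each seven-entry segment $w_{v_i}$ is nonzero. This shows the dependence couples all seven vertex cycles, consistent with each $7\times 3d_v$ block having full row rank $7$; that fact I would verify by a nonzero $7\times7$ minor per block.

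\textbf{Exactness of the drop.} To prove the rank is exactly $48$, not lower, at both exceptional points, I would exhibit a nonzero $48\times48$ minor of $\bbC$ evaluated there. Equivalently, I would show that the computed nullspace of $\bbC(t_0)^{\top}$ is one-dimensional.

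\textbf{Main obstacle.} The hardest step is the first: certifying that the gcd of the maximal minors has no roots in $I$ besides $1/5$ and $3/83$. Symbolic $49\times49$ determinants are expensive. I would first try block elimination. The blocks for $v_0$ and $v_3$ do not depend on $t$ and contain only fixed edges, so their columns can be pivoted out exactly over $\mathbb Q$. This leaves a small Schur complement in $\mathbb Q[t]$ whose determinant, or whose minors' gcd, can be factored directly.
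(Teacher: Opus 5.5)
Your proposal is correct and follows essentially the same route as the paper. Both use a maximal $49\times49$ minor whose factorization has no roots in $I$ other than $1/5$ and $3/83$, explicit left-kernel vectors at those two parameters, and nonzero $48\times48$ minors to show the rank drops by exactly one. The dimensions are then read off from Proposition~\ref{prop:cofactor-dim}. The paper shows that a single column subset $J$ already suffices, so no gcd over several minors is needed. Your two-line-form count at $t=3/83$ goes a little further: it derives the local kernel vector conceptually, whereas the paper obtains it by exact elimination and only interprets it afterwards in Remark~\ref{rem:local-global}.
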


\begin{proof}
The proof uses two stages of exact rank certificates. First, we exhibit a nonzero maximal minor of $\bbC(t)$; its factorization proves full row rank except at the admissible zeros of that minor. Second, at each remaining parameter, a nonzero vector $z$ satisfying $z^{\mathsf T}\bbC(t)=0$ proves that the rank drops, while a nonzero $48\times48$ minor proves that it drops by exactly one. The spline dimensions then follow from Proposition~\ref{prop:cofactor-dim}.

Select the 49 columns
\begin{equation}
J=\{1{:}12,\ 19{:}51,\ 58,59,61,62\},
\label{eq:J}
\end{equation}
using one-based indexing and the convention above. Exact elimination gives
\begin{equation}
\det\bbC(t)_{[:,J]}
 =\kappa(3t-5)^4(4t+3)(5t-1)(55t-24)^3(83t-3),
\label{eq:maximal-minor}
\end{equation}
where
\begin{equation}
\kappa=\frac{2^{104}3^{75}7^{11}29^3 47^5}{5^{34}11^{38}}\ne0.
\label{eq:kappa}
\end{equation}
On the open interval $I$, the factors $3t-5$, $4t+3$, and $55t-24$ do not vanish. The maximal minor is therefore nonzero except at $t=1/5$ and $t=3/83$. Since $\bbC(t)$ has 49 rows, this proves full row rank at every other admissible parameter.

At each of the two remaining parameters, exact elimination produces an explicit nonzero vector $z$ satisfying $z^{\mathsf T}\bbC(t)=0$, so the rank is at most 48. At $t=1/5$, the $48\times48$ minor with rows $\{1{:}45,47{:}49\}$ and columns $\{1{:}12,19{:}51,58,59,61\}$ is nonzero. At $t=3/83$, the minor with rows $\{1{:}44,46{:}49\}$ and the same columns is nonzero. Thus the rank is at least 48 at both points and \eqref{eq:rank-profile} follows. Formula \eqref{eq:dimension-profile} is then immediate from Proposition~\ref{prop:cofactor-dim}, since $10+3E_0=82$.
\end{proof}

The factorization in \eqref{eq:maximal-minor} separates three types of roots. The endpoint factors $4t+3$ and $55t-24$ correspond to a collapsing central fan; $3t-5$ lies outside the admissible interval. The two interior roots are the only nondegenerate rank-loss events in this family.

\begin{corollary}[A nonsingular counterexample]
\label{cor:counterexample}
At $t=1/5$, the triangulation is nondegenerate, $\sigma=0$, and
\begin{equation}
\dim S_3^1(\T(1/5))=34>P_{\T(1/5)}(1,3)=33.
\label{eq:strict-counterexample}
\end{equation}
\end{corollary}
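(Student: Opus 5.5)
The corollary follows by combining three results already established, with no new computation. I would take the nondegeneracy of $\T(1/5)$ from Lemma~\ref{lem:admissible}, the value of $\sigma$ from Lemma~\ref{lem:sigma}, and the dimension from Theorem~\ref{thm:rank-profile}.

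\emph{Step 1: nondegeneracy.} I would check that $1/5$ lies in $I=(-3/4,24/55)$. Since $24/55\approx 0.436>0.2>-0.75$, it does. Lemma~\ref{lem:admissible} then says $\T(1/5)$ is a nondegenerate conforming triangulation with $F=18$, $E_0=24$ and $V_0=7$. As an extra check, the doubled areas in \eqref{eq:moving-areas} at $t=1/5$ are $9(3/5-5)/55$, $56(11-24)/3025$, $9(3/5-5)/55$ and $-(4/5+3)/2$. All four are nonzero and negative.

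\emph{Step 2: $\sigma=0$.} Because $1/5\neq 3/83$, Lemma~\ref{lem:sigma} gives $\sigma(1/5)=0$. Directly, \eqref{eq:singular-determinant} evaluates to $(83/5-3)/55=68/275\neq0$, so the central vertex $v_6$ is not singular. The other interior vertices have valence at least five, so none of them can be a singular four-star.

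\emph{Step 3: the dimension.} Equation \eqref{eq:count-family} gives $P_{\T(1/5)}(1,3)=33+0=33$. Theorem~\ref{thm:rank-profile} gives $\rank\bbC(1/5)=48$. Proposition~\ref{prop:cofactor-dim} then yields
\[
\dim S_3^1(\T(1/5)) = 82-48 = 34 > 33 .
\]
Equivalently, the discrepancy in \eqref{eq:defect} is $\delta = 49-0-48 = 1$.

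No step here is an obstacle, since all the real work sits in the certificates of Theorem~\ref{thm:rank-profile}. That theorem needs two facts at $t=1/5$. One is a nonzero left-kernel vector $z$ with $z^{\mathsf T}\bbC(1/5)=0$, which forces the rank to drop. The other is a nonzero $48\times48$ minor, which shows the rank drops by exactly one. The point this corollary adds is that the rank loss at $t=1/5$ occurs while $\sigma=0$, so it cannot be explained by Schumaker's singular-vertex correction.
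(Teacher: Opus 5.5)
Your proposal is correct and is the same route as the paper, which gives the corollary no separate proof and reads it off from Lemma~\ref{lem:admissible} (since $1/5\in I$), Lemma~\ref{lem:sigma} (since $1/5\neq 3/83$), equation \eqref{eq:count-family}, and the rank $48$ from Theorem~\ref{thm:rank-profile} through Proposition~\ref{prop:cofactor-dim}. Your arithmetic checks are also right: the four fan areas at $t=1/5$ are all negative, and the singular-determinant value is $68/275\neq0$.
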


To display the coordinate dependence on a fixed abstract triangulation, Table~\ref{tab:rank-dim} compares the hidden point $t=1/5$ with the nearby realization $t=101/500=1/5+1/500$. Both realizations are nondegenerate and have $\sigma=0$, but this small perturbation raises the matrix rank from 48 to 49 and changes the spline dimension from 34 to 33 without changing the combinatorial complex.

\begin{table}[htbp]
\centering
\caption{Local count, matrix rank, and dimension at representative parameter values.}
\label{tab:rank-dim}
\begin{tabular}{@{}llllll@{}}
\toprule
Parameter & Geometry at $v_6$ & $\sigma$ & $\rank\bbC$ & $P_{\T}(1,3)$ & $\dim S_3^1$ \\
\midrule
$t\in I\setminus\{1/5,3/83\}$ & nonsingular & 0 & 49 & 33 & 33 \\
$t=101/500$ & nonsingular, $1/500$ from $1/5$ & 0 & 49 & 33 & 33 \\
$t=3/83$ & singular four-star & 1 & 48 & 34 & 34 \\
$\boldsymbol{t=1/5}$ & \textbf{nonsingular} & \textbf{0} & \textbf{48} & \textbf{33} & \textbf{34} \\
\bottomrule
\end{tabular}
\end{table}

After removing the factors that stay nonzero in the interior of $I$, the two roots in \eqref{eq:maximal-minor} are simple. Thus each exceptional point is isolated along this parameter line, and the selected maximal minor vanishes there to first order. In particular, the additional spline mode disappears under an arbitrarily small generic displacement of the moving vertex; it is not a combinatorial degree of freedom attached to the abstract complex.

\begin{remark}[Local and global rank loss]
\label{rem:local-global}
The equal dimensions at $t=1/5$ and $t=3/83$ arise from different mechanisms. Let $\bbC_v(t)$ denote the seven-row block associated with the cycle around $v$. At $t=3/83$, the central block has rank 6 while the other six blocks have rank 7, and the left-kernel relation is supported only on the central four-star. This is the familiar local dependence counted by $\sigma=1$. At $t=1/5$, every block $\bbC_v(1/5)$ has rank 7, but the assembled matrix has rank 48 because its unique row dependence has nonzero components in all seven vertex-cycle blocks. The latter dependence is therefore global: it appears only when the individually full-rank local systems are assembled. The same one-dimensional matrix rank loss is absorbed by the local correction at $t=3/83$, whereas at $t=1/5$ it remains as the additional dimension beyond Schumaker's lower bound.
\end{remark}

\begin{remark}[Complementary B-net calculation]
\label{rem:bnet-verification}
The dimension profile can also be recovered directly from the larger Bernstein--B\'ezier matching matrix. For this family, $\bbB(t)\in\R(t)^{168\times180}$. Direct rank computation gives rank 146 at $t=1/5$ and $t=3/83$, and rank 147 for $t\in I\setminus\{1/5,3/83\}$. Formula~\eqref{eq:bnet-dim} therefore yields dimensions 34 at the two exceptional parameters and 33 elsewhere, in agreement with the smoothing-cofactor calculation.
\end{remark}

\section{Conclusion}

We have disproved the long-standing conjecture that Schumaker's lower bound is always attained for $S_3^1(\T)$ on nondegenerate planar triangulation by constructing a one-parameter family of realizations of a fixed 18-triangle complex,
with  the central vertex moving as $v_6(t)=(t,0)$ on the admissible interval $I=(-3/4,24/55)$.
For $t\in I\setminus\{1/5,3/83\}$, the lower bound is attained and $\dim S_3^1(\T(t))=33$. At $t=3/83$, the central four-star is singular and the local correction $\sigma=1$ accounts for the dimension 34. At $t=1/5$, by contrast, all interior vertices are nonsingular, but $\dim S_3^1(\T(1/5))=34>P_{\T(1/5)}(1,3)=33$. The smoothing-cofactor analysis distinguishes the two exceptional mechanisms: the dependence at $t=3/83$ is confined to the central vertex block, whereas the dependence at $t=1/5$ couples all seven interior vertex cycles even though every individual block has full row rank. The complementary Bernstein--B\'ezier calculation gives the same dimension profile. Thus, for arbitrary nondegenerate realizations, the singular-four-star correction does not capture every geometry-dependent contribution to the dimension of $S_3^1$; genuinely global compatibility must also be taken into account.


\section*{Acknowledgements}

This work was supported by the National Key R\&D Program of China (Nos. 2022YFA1005200 and 2022YFA1005201), the NSF of China (No. 12171453), and the Major Project of Science and Technology Innovation Tackling Plan of Anhui Province (No. 202423e09050003).

\section*{Declarations}

The authors declare that they have no conflict of interest.

\bibliographystyle{cas-model2-names}

\bibliography{refs}



\end{document}